\newcommand{\R}{\ensuremath\mathbb{R}}
\newcommand{\Z}{\ensuremath\mathbb{Z}}
\newcommand{\N}{\ensuremath\mathbb{N}}
\renewcommand{\P}{\ensuremath\mathbb{P}}
\newcommand{\supp}{\operatorname{supp}}
\newcommand{\spn}{\operatorname{span}}
\newcommand{\SplSp}{\mathscr{S}_{n}(T,\Omega)}
\newcommand{\tSplSp}{\mathscr{S}_{n}^*(T,\Omega)}
\newcommand{\calC}{\ensuremath\mathcal{C}}
\newcommand{\calB}{\ensuremath\mathcal{B}}
\newcommand{\calN}{\ensuremath\mathcal{N}}
\theoremstyle{plain}%
\newtheorem{lemma}{Lemma}
\newtheorem{theorem}{Theorem}
\newtheorem{definition}{Definition}
\begin{document}

\title{Anisotropic spline approximation with non-uniform B-splines}

\author{N. Sissouno$^{\ast}$\thanks{$^\ast$Corresponding author. Email:
sissouno@forwiss.uni-passau.de
\vspace{6pt}}
}
\date{September 5, 2016}

\maketitle

\begin{abstract}
Recently the author and U. Reif introduced the concept of
diversification of uniform tensor product B-splines.
Based on this concept, we give a new constructive modification of
\mbox{non-uni}form B-splines. The resulting spline spaces are perfectly
fitted for the approximation of functions
defined on domains \mbox{$\Omega\subset \R^2$}. We build a bounded
quasi-interpolant and prove that for our spline spaces
an anisotropic error estimate in the $L^p$-norm, $1\le p\le\infty$,
is valid. In particular, we show that the constant of the error
estimate does not depend on the shape of $\Omega$ or the knot grid.

\end{abstract}


\section{Introduction}
B-splines are used in different fields of applications such as
computer aided design and finite element analysis.
In this context, lately, tensor product splines became important
due to their good approximation properties over $\R^d$ or boxes and
their efficient implementation \cite{Hughes,Hoellig2}.
However, over arbitrary domains tensor product spline approximation
cannot be used directly since some of those properties get lost.
The concept of diversification of uniform tensor product B-splines
over subsets $\Omega\subset\R^d$ is crucial for the construction
\cite{ReS} of a spline space such that an anisotropic
error estimate with respect to the sup-norm can be obtained.
Diversification itself is not connected to the uniformity of
tensor product B-splines but the methods for proving the error
estimate are tied to it. The aim of this paper is a generalisation
of the results in \cite{ReS} in two different ways. We adapt the
construction to arbitrary knot sequences and, at the same time,
consider the error with respect to the $L^p$-norm for
\mbox{$1\le p \le\infty$.}

For the spline space $\SplSp$ of standard tensor
product splines of coordinate order ${n = (n_1,\dots,n_d)}$ with knots
$T = (T_1,\dots,T_d)$,
$T_\sigma = (\dots\le\tau_{\sigma,-1}\le\tau_{\sigma,0}
	\le\tau_{\sigma,1}\le\dots)$
with $\tau_{\sigma,k}<\tau_{\sigma,k+n_\sigma}$,
$\sigma\in\{1,\dots,d\}$, restricted to $\Omega$ \cite{DeVore} show
that 
\begin{equation}
\label{eq:stdest}
  \min_{s \in \SplSp} \|f-s\|_{\Omega,L^p}
	\le C \sum_{i=1}^d h_i^{n_i} \|\partial_i^{n_i} f\|_{\Omega,L^p}
\end{equation}
for some $C >0$, where $h = (h_1,\dots,h_d)$ with
$h_\sigma:=\max_k|\tau_{\sigma,k}-\tau_{\sigma,k+1}|$ is the maximal
spacing of knots. As already stated in \cite{ReS}, the problem is
that, on one hand, the set of appropriate domains is strongly
restricted and, on the other hand, the constant $C$ is depending on the
aspect ratio $\varrho:=\max_{i,j}h_i/h_j$. The latter problem appears in a
similar fashion in \cite{HoR} and \cite{MoessnerReif}.

It is hardly surprising that the spline space $\SplSp$ is not the
best choice for approximation on arbitrary subsets of $\R^d$.
So, we construct a larger space $\tSplSp$ of non-uniform diversified
splines and prove that estimate \eqref{eq:stdest} is valid for bivariate
approximations. It holds true for the same broad class of domains
$\Omega\in\R^2$ as in the uniform case and the constant $C$ is
independent of the aspect ratio.

The main differences between the uniform case \cite{ReS} and the
non-uniform case considered here are parts of the proof of the main
result. Particularly, the concept of condensation and the construction
of a quasi-interpolant as well as the proof of its boundedness are
bound to the uniformity of the knots.
We develop a new non-uniform condensation and give a construction
of a quasi-interpolant for the resulting B-splines which is bounded
in the $L^p$-norm for $1\le p\le \infty$.

We present the univariate non-uniform condensation and some auxiliary
univariate results in the second section. 

In the third section, we introduce our main theorem claiming that
\eqref{eq:stdest} is true for approximation with non-uniform
diversified B-splines. The section also contains necessary notions
and the definition of the set of domains for which the theorem holds.

The main ingredients of the proof of the main theorem are
non-uniform condensation, a local knot structure, and a
quasi-interpolant. All those ingredients and the final proof are
stated in the fourth section.

Although the key elements for the proof are new, the basic structure
as well as diversification and the set of domains are the same as in
the uniform case. Therefore, we will
closely follow the structure and notation of \cite{ReS}. For
convenience of the reader, we will recall relevant material from
\cite{ReS} briefly, thus making our exposition self-contained.
\section{Issues in one variable}\label{123}
In this section, the knot sequence is given by $T=\{\tau_k\}_{k\in\Z}$
with $\tau_k\le\tau_{k+1}$ and $\tau_{k}<\tau_{k+n}$. As in the
uniform case, this sequence is not a priori well adapted to
a given domain
$\omega = (\underline\omega, \overline{\omega})$ in the sense that
$|\omega| := \overline{\omega}-\underline{\omega}$ may be much smaller
than the maximal knot spacing $h$. To adapt the knot sequence to
the domain, two cases need to be considered:
\begin{enumerate}
\item If $|\omega| \le h$, then the knot sequence is changed by
	replacing all knots outside of the domain by	multiple knots at the
	boundary.
	The new knot sequence is given by
	$T^\omega:=\{\tau^\omega_k\}_{k\in \Z}$, where
	\[
	\tau^\omega_k:=
	\begin{cases}
	\underline\omega &
		\text{for }k\in\{j\in\Z: \tau_j\le \underline{\omega}\} ,\\
	\tau_k & \text{for }k\in\{j\in\Z: \tau_j\in \omega\} ,\\
	\overline{\omega} &
		\text{for }k\in\{j\in\Z: \tau_j\ge \overline{\omega}\}.
	\end{cases}
	\]
\item If $|\omega| > h$, then nothing is changed, i.e.,
	$T^\omega := T$.
\end{enumerate}
For example, given $\omega=(0,1)$ and $T$ with $h\ge 1$  and
$\tau_k\notin \omega$ for all $k\in\Z$, the \mbox{B-s}plines with respect
to the knot sequence $T^\omega$
coincide with the Bernstein polynomials.
Analogous to the uniform case, the process of replacing a given knot
sequence $T$ by $T^\omega$ with maximal spacing $h^\omega\le|\omega|$
under the condition that knots within the domain $\omega$ remain
unchanged is called {\em non-uniform condensation}. It allows us to
carry over some notations and properties from~\cite{ReS}:
The {\em non-uniform condensed B-splines} with respect to $T^\omega$
are denoted by $b^\omega_i, i \in \Z$. The {\em set of indices of
active B-splines} is given by $I_\omega := 
\{i \in \Z : \supp b_i \cap \omega \neq \emptyset\}$, and we have
$I_\omega=\{i \in \Z : \supp b^\omega_i \cap \omega \neq \emptyset\}$.
The supports satisfy
$\supp b^\omega_i \subset \supp b_i$ for $	i \in I_\omega$
and the spanned spline spaces coincide, i.e.
$\spn\{b_i\chi(\omega) : i \in I_\omega\} =
	\spn\{b^\omega_i\chi(\omega) : i \in I_\omega\}$,
where $\chi(\omega)$ denotes the characteristic function of the domain.

The construction of a quasi-interpolant, which is necessary for
the proof of the main theorem, consists of two steps.
First, a projection into the space of polynomials and, second, a
representation of polynomials in terms of splines. We will briefly
recall some univariate results, which will be of use for this
construction.

Let $I^*\subseteq I$ be intervals in $\R$ and
$\{\ell_\alpha\in L^1(I) : \alpha < n\}$ the normalised Legendre
polynomials of degree $<n$ over $I$. A projection of functions
$f\in L^1(I^*)$ onto the space of polynomials is then given by
\[
\hat{f}=\sum_{\alpha=0}^{n-1}\langle f,\ell_\alpha\rangle \ell_\alpha.
\]
For estimates in the $L^p$-norm the following results will be useful.
\begin{lemma}\label{la:ell}
If the Legendre polynomials
$\{\ell_\alpha\in L^1(I):\alpha < n\}$ for $I\subset\R$ of size
$|I|=h$, $h>0$, are normalised such that
\[
	\int_I \ell_\alpha\ell_\beta
	= 
	\begin{cases}
		0 & \alpha\neq\beta\\
		1 & \alpha = \beta
	\end{cases},
\]
then
\begin{equation*}
\|\ell_\alpha\|_{I,p}\|\ell_\alpha\|_{I,\infty}
\le c_{n}\,h^{\frac{1}{p}-1}
\end{equation*}
with a constant only depending on $n$.
\end{lemma}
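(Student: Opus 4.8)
The plan is to reduce everything to the reference interval by an affine change of variables and to invoke the standard scaling behaviour of Legendre polynomials. First I would fix the reference interval, say $[-1,1]$ (or $[0,1]$; the choice is cosmetic), and let $L_\alpha$ denote the classical Legendre polynomials there with the familiar normalisation $\int_{-1}^1 L_\alpha L_\beta = \frac{2}{2\alpha+1}\delta_{\alpha\beta}$. Since $\alpha<n$ ranges over a finite set, the quantities $\|L_\alpha\|_{[-1,1],p}$ and $\|L_\alpha\|_{[-1,1],\infty}$ are bounded by a constant depending only on $n$ (for the sup-norm one has the explicit bound $\|L_\alpha\|_\infty = L_\alpha(1) = 1$, but any crude bound suffices). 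Then I would pass from $[-1,1]$ to an arbitrary $I$ with $|I|=h$ via the affine bijection $\phi\colon [-1,1]\to I$, noting $\phi$ has constant Jacobian $h/2$.

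The key step is to track how the $L^p$-normalisation scales. If $\tilde\ell_\alpha := L_\alpha\circ\phi^{-1}$ denotes the pullback to $I$ of the reference Legendre polynomial, then by the change-of-variables formula $\int_I \tilde\ell_\alpha\tilde\ell_\beta = \frac{h}{2}\int_{-1}^1 L_\alpha L_\beta$, so $\{\tilde\ell_\alpha\}$ is an orthogonal family on $I$ with $\|\tilde\ell_\alpha\|_{I,2}^2 = \frac{h}{2}\cdot\frac{2}{2\alpha+1}$. To achieve the $L^2$-normalisation $\int_I \ell_\alpha\ell_\beta = \delta_{\alpha\beta}$ required in the statement, one must set $\ell_\alpha = \big(\tfrac{2\alpha+1}{h}\big)^{1/2}\tilde\ell_\alpha$, i.e. $\ell_\alpha = c_\alpha\, h^{-1/2}\,(L_\alpha\circ\phi^{-1})$ with $c_\alpha$ depending only on $\alpha$ (hence on $n$). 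From here the two norms are immediate: the sup-norm is invariant under the affine reparametrisation of the domain, so $\|\ell_\alpha\|_{I,\infty} = c_\alpha h^{-1/2}\|L_\alpha\|_{[-1,1],\infty} \le C_n h^{-1/2}$; and for the $L^p$-norm, $\|\ell_\alpha\|_{I,p}^p = c_\alpha^p h^{-p/2}\int_I |L_\alpha\circ\phi^{-1}|^p = c_\alpha^p h^{-p/2}\cdot\frac{h}{2}\int_{-1}^1 |L_\alpha|^p$, so $\|\ell_\alpha\|_{I,p} = c_\alpha h^{-1/2} h^{1/p}\big(\tfrac12\int_{-1}^1|L_\alpha|^p\big)^{1/p} \le C_n' h^{\frac1p - \frac12}$ (with the obvious modification $\|\ell_\alpha\|_{I,\infty} \le C_n'h^{-1/2}$ when $p=\infty$, which is the $p=\infty$ case of the same formula).

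Multiplying the two bounds gives $\|\ell_\alpha\|_{I,p}\|\ell_\alpha\|_{I,\infty} \le C_n'' h^{(\frac1p-\frac12)+(-\frac12)} = C_n'' h^{\frac1p-1}$, which is exactly the claimed estimate with $c_n := C_n''$. I do not expect a genuine obstacle here; this is a routine scaling argument. The only point requiring a little care is bookkeeping the power of $h$ picked up in the normalisation constant versus the power picked up in the integral under the substitution — it is precisely the interplay $h^{-1/2}$ (normalisation) times $h^{1/p}$ (the $L^p$ integral) for each factor that produces the final exponent $\frac1p-1$, and one should double-check the endpoint case $p=\infty$ separately, where the normalisation factor $h^{-1/2}$ appears in both factors and the integral contributes no power of $h$.
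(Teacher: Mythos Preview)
Your proof is correct and follows essentially the same affine-scaling approach as the paper: reduce to the reference interval, identify the normalisation factor $h^{-1/2}$, and read off the sup-norm bound $\|\ell_\alpha\|_{I,\infty}\le C_n h^{-1/2}$. The only cosmetic difference is that the paper, rather than computing $\|\ell_\alpha\|_{I,p}$ directly by change of variables, uses the one-line shortcut $\|\ell_\alpha\|_{I,p}\le \|1\|_{I,p}\,\|\ell_\alpha\|_{I,\infty}=h^{1/p}\,\|\ell_\alpha\|_{I,\infty}$ and then squares the sup-norm bound; this avoids tracking the $L^p$ integral separately but yields the same exponent.
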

\begin{proof}
Without loss of generality, we assume that $I=[0,h]$. Let
$L_\alpha$, $\alpha<n$,
denote the normalised Legendre polynomials over $[-1,1]$. Then one
has $\ell_\alpha(x)=\sqrt{\frac{2}{h}}L_\alpha(\frac{2x-h}{h})$.
Since $L_\alpha$ is bounded by a constant only depending on $n$, we
get
\begin{equation*} 
\|\ell_\alpha\|_{I,\infty}\le c'_{n}\, h^{-\frac{1}{2}}.
\end{equation*}
Therefore, we have
\begin{equation*} 
	\|\ell_\alpha\|_{I,p}\|\ell_\alpha\|_{I,\infty}
	\le 
	\|1\|_{I,p}\|\ell_\alpha\|_{I,\infty}^2
	\le
	c_{n} h^{\frac{1}{p}-1}
\end{equation*}
with $c_{n}:=(c'_{n})^2$.
\end{proof}
The representation of polynomials in terms of non-uniform splines is
slightly different from the representation in the uniform case.
For a univariate B-spline $b_i$, let $I_i:=[\tau_j,\tau_{j+1})$ with
$j\in\{i,\dots,i+n-1\}$ and
$|I_i|\ge \frac{1}{n}|\supp b_i|$ be the longest knot interval of
$\supp b_i$.
For any polynomial $p \in \P_n$ of order $n$, the coefficient $s_i$
in the representation $p = \sum_i b_i s_i$ can be determined as
linear combination of the values of $p$ at the points 
$\lambda_{i,m}=\tau_j+\frac{m-1}{n-1}|I_i|$, $m=1,\dots,n$, and
certain weights ${w}^n_{i,1},\dots,{w}^n_{i,n}$ that can be
computed \cite{Hoellig2} by solving the linear system
$\sum_{m=1}^n {w}^n_{i,m}(m-l)^{n-1}=\frac{(n-1)^{n-1}}{|I_i|^{n-1}}
\psi_i^n(\lambda_{i,l})$, $l=1,\dots,n$, where
$\psi^n_i(\tau):=(\tau_{i+1}-\tau)\cdots(\tau_{i+n-1}-\tau)$. Since
$|\psi_i^n(\lambda_{i,l})|\le|\supp b_i|^{n-1}$, the right hand side is
bounded by $n^{2n}$. Additionally, the entries $(m-l)^{n-1}$ do not
depend on $T$. Therefore, the weights ${w}^n_{i,m}$ are bounded by
a constant only depending on $n$ and independent of $T$ and $b_i$.
Now, we have
\[
  p(x)= \sum_{i \in I_\omega}
  				b_i(x) \sum_{m=1}^n {w}^n_{i,m} p(\lambda_{i,m})
	,\quad
	x \in \omega
	.
\]
\section{Bivariate approximation}
Adapting univariate condensation to the bivariate case does not
solve the problem that the constant in \eqref{eq:stdest} depends
on the aspect ratio $\varrho:=\max_{i,j}h_i/h_j$. The dependence is
caused by B-splines
whose intersection with $\Omega$ consists of several connected
components (see \cite[Section 2.2]{ReS} for an example). This
problem can be solved by the concept of diversification, which will be
explained in this section and enables us to present the main
theorem.

At first we need some notations {and definitions recalling
relevant material from \cite{ReS}}.
\subsection{Notation}\label{notation}
Let $\Omega, M$ and $A$ denote subsets of $\R^2$ and $I$
an interval in $\R^2$ given by the Cartesian product of two
intervals in $\R$ with size $|I| := \sup\{x-y : x,y \in I\}$, where
the supremum is understood component-wise.
{The size of a subset $M$ is defined as the size of its bounding
box, i.e., $|M| := |\calB(M)|$ for {\em bounding box}
$\calB (M)=\calB_1(M)\times\calB_2(M):=\inf_{|I|}\{I:M\subset I\}$.}
Given $h \in [0,\infty]^2$ we define
{the {\em $h$-neighbourhood} of $M$ by}
$\calN(M,h):=\bigl\{x \in \R^2 : |\{x\} \cup M| \le |M|+h\bigr\}$.
The set of connected components of $M \cap \Omega$ is denoted by
$\calC_\Omega(M)$. If, in addition, $M$ is connected and
$M\subset\Omega$, we
{need two more notations, namely, for the elements of
$\calC_\Omega(\calB(M))$ and $\calC_\Omega(\calN(M,h))$ which
contain $M$, i.e., the {\em pruned bounding box}
$\calB_\Omega(M):=\{A\in\calC_\Omega(\calB(M)): M\subseteq A\}$
and the {\em local $h$-neighbourhood}
$\calN_\Omega(M,h):=\{A\in\calC_\Omega(\calN(M,h)):M\subseteq A\}.$}

From now on, the index $\sigma \in \{1,2\}$ is always addressing the two
coordinate directions, and the components of two-dimensional objects
are tagged by the prepending subscript $\sigma$. For instance,
$x = (x_1,x_2) \in \R^2$ or $x_j = (x_{1,j},x_{2,j})$ 
for a sequence $\{x_j\}_j$ in $\R^2$.

We give some further notations connected to the spline space $\SplSp$
spanned by the tensor product B-splines
$B_i(x) := b_{1,i_1}(x_1) b_{2,i_2}(x_2)$ of coordinate order
$n \in \N^2$ for $x \in \R^2|_\Omega$
and $i \in \Z^2$. Here, $b_{\sigma,i_\sigma}$ denotes the univariate
B-splines of order $n_\sigma$ with knots $T_\sigma$. We need
individual knots $\tau_k := (\tau_{1,k}, \tau_{2,k})$, $k\in\Z^2$,
{with} midpoints $\mu_k := (\tau_{k-(1,1)}+\tau_k)/2$
{and the {\em width of a cell} $h_k:=|\tau_k-\tau_{k-(1,1)}|$.
A {\em grid cell} $\Gamma_k$ is then defined by the
$h_k/2$-neighbourhood of $\mu_k$, that is,
$\Gamma_k:= \calN(\{\mu_k\},h_k/2)$.}
The support of a B-spline is denoted by
$S_i := \supp B_i = \calB(\{\tau_i\}\cup\{\tau_{i+n}\})$.
Further, we define $\bar n := \max(n_1,n_2)$.
\subsection{Main result}
\label{main}
As already mentioned in the introduction, diversification as well
as the set of domains are the same as in the uniform case. Therefore,
we just state the definitions and a few important comments
(see \cite{ReS} for {more} details) before we present the main result.
\begin{definition}
\label{def:omega}
Let $\Phi = [a,\varphi]$ be a pair consisting of the real number
$a >0$ and a continuous function $\varphi : X \to \R_{>0}$ defined on
the interval $X := [-a,a]$. With $X^\delta := [-a+\delta, a-\delta]$
the sub-interval with margin $\delta \in \R$, let
\[
  \Phi^\delta := \bigl\{x \in X^\delta \times \R : 
	\delta < x_2 < \varphi(x_1)\bigr\}
	.
\]
An {\em axis-aligned isometry} in $\R^2$ is a composition of a translation
and a rotation by an integer multiple of $\pi/2$.

A subset $\Omega\subset\R^2$ is called a 
{\em finite (or bounded) graph domain with parameter
$h_0 \in \R_{>0}$}
if there exists an finite index set $R \subset \N$, axis-aligned
isometries $\Sigma_r$, and pairs $\Phi_r = [a_r,\varphi_r]$ as above
with $a_r > h_0$ and $\min_{X_r} \varphi_r > h_0$ such that
\[
  \Omega = \bigcup_{r \in R} \Sigma_r(\Phi_r^\delta)
	,\quad
	0 \le \delta \le h_0
	.
\]
\end{definition}
{Basically, those domains consist of the union of subdomains
such that the boundary is piecewise represented by graphs of
continuous functions and $h_0$ guarantees a certain amount of
overlap of $\Sigma_r(\Phi_r^0)$. This overlap allows the following
argument, which}
will be used repeatedly, and without further
notice: Given $\Omega$ as above, let $M \subset \Omega$ be an
arbitrary subset of size $|M| \le 2h_0$. There exists a point
$x \in M$ satisfying $M \subset \calN_\Omega(\{x\},h_0)$ and an index
$r \in R$ such that $x \in \Sigma_r(\Phi_r^{h_0})$.
Hence, $M \subset \Sigma_r(\Phi_r^{0})$.
Without loss of generality, we can assume that $\Sigma_r$ is the
identity. Since the value of the index $r \in R$ will be irrelevant,
we drop the index $r$ and write $\Phi = [a,\varphi]$
instead of $\Phi_r = [a_r,\varphi_r]$ when examining $M$.
In particular, it is $M \subset \Phi^0$.

The idea of {\em diversification} is to use a copy of a given
B-spline $B_i$ for each connected component of its support in
$\Omega$, i.e., the {\em diversified B-splines} are defined by
\[
  B_j := B_i \chi(\gamma)
	,\quad
	j = (i,\gamma) \in J:= \bigl\{(i,\gamma) : i \in \Z^2,
  							\gamma \in \calC_\Omega(S_i)\bigr\}
	,
\]
with support $S_j := \supp B_j = \supp \chi(\gamma)$,
{see Figures~\ref{fig:1a} and~\ref{fig:1b}}.
From now on, the
subscripts $i \in \Z^2$ indicate standard B-splines, while subscripts
$j = (i,\gamma) \in J$ indicate their diversified descendants. The
diversified B-splines span the space $\tSplSp$ and we have
$\SplSp \subseteq \tSplSp$.

To formulate our main result, we define the
{\em anisotropic Sobolev space}
$W^n_p(\Omega)$, $1\le p\le\infty$ of order $n \in \N^2$ as the space
of real-valued functions $f\in L^p(\Omega)$ with partial derivatives
$\partial_1^{n_1}f, \partial_2^{n_2}f\in L^p(\Omega)$.
\begin{theorem}
\label{thm:approx}
Let $\Omega$ be a graph domain with parameter $h_0$, and let
$n\in\N^2$. There exists a constant $C$ depending only on $n$ and $p$
such that
	\[
		\inf_{s \in \tSplSp} \|f-s\|_{\Omega,p}
		\le C
		\left(h_1^{n_1}\|\partial_1^{n_1}f\|_{\Omega,p}
		     +h_2^{n_2}\|\partial_2^{n_2}f\|_{\Omega,p}
		\right)
	\]
for any $f\in W^{n}_p(\Omega)$, $1\le p\le \infty$, and any knot
sequence $T$ with grid width $h \le h_0/(\bar n+1)$.
\end{theorem}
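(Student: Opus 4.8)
The plan is to reduce the global approximation problem to a purely local one by means of a quasi-interpolant built from the ingredients sketched in Sections~2 and~3. Concretely, I would first construct a linear operator $Q\colon W^n_p(\Omega)\to\tSplSp$ of the form $Qf=\sum_{j\in J}\lambda_j(f)\,B_j$, where each coefficient functional $\lambda_j$ depends only on the values of a local polynomial projection of $f$. The local polynomial is obtained by the Legendre projection $\hat f$ from Section~2 on a suitable box $I^*\subset\Omega$ attached to each diversified B-spline $B_j=B_i\chi(\gamma)$; the box is chosen inside the pruned bounding box $\calB_\Omega(\gamma)$ (or a slightly enlarged local neighbourhood), which is where diversification pays off, since each $\gamma\in\calC_\Omega(S_i)$ is a connected set living inside some $\Phi^0$, so its bounding box stays in $\Omega$. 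The spline coefficients attached to a fixed polynomial are then read off via the tensor product of the univariate formula $p(x)=\sum_{i\in I_\omega}b_i(x)\sum_m w^n_{i,m}p(\lambda_{i,m})$ recalled at the end of Section~2, with the univariate weights $w^n_{i,m}$ bounded by a constant depending only on $n$.

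Second, I would prove three properties of $Q$. (i) \emph{Polynomial reproduction}: if $f$ agrees with a polynomial of coordinate order $n$ on the relevant local neighbourhood, then $Qf=f$ there; this follows from exactness of the Legendre projection on $\P_n$ and the exactness of the spline representation of polynomials. (ii) \emph{Locality}: $(Qf)|_{\Gamma_k}$ depends only on $f|_{U_k}$ for a neighbourhood $U_k\subset\Omega$ of the grid cell $\Gamma_k$ whose size is $\le c_n h\le c_n h_0$, which is small enough that the graph-domain ``overlap'' argument from Section~3.2 applies, i.e.\ $U_k\subset\Phi^0$ for some chart. (iii) \emph{$L^p$-stability}: $\|Qf\|_{\Gamma_k,p}\le C\|f\|_{U_k,p}$ with $C=C(n,p)$ independent of $T$ and of $\Omega$. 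This last estimate is where Lemma~\ref{la:ell} enters: the coefficient $\lambda_j(f)$ is an inner product $\langle f,\ell_\alpha\rangle$ against a normalised Legendre polynomial times a bounded weight, so $|\lambda_j(f)|\le \|f\|_{U,p}\,\|\ell_\alpha\|_{U,p'}\,c_n$, and then $\|\lambda_j(f)B_j\|_{\Gamma_k,p}$ is controlled using $\|B_j\|_\infty\le1$, $|\Gamma_k|\le h^2$, the bound $\|\ell_\alpha\|_p\|\ell_\alpha\|_\infty\le c_n h^{1/p-1}$, and the finite overlap of the supports $S_j$ over any fixed cell (at most $n_1 n_2$ standard B-splines, each diversified into a bounded number of pieces meeting $\Gamma_k$).

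Third, with $Q$ in hand the estimate follows by the standard Bramble--Hilbert / Whitney averaging argument performed cell by cell. On each grid cell $\Gamma_k$ choose the local neighbourhood $U_k$ as above; by (i) we may subtract any local polynomial $q$ of coordinate order $n$, so
\[
  \|f-Qf\|_{\Gamma_k,p}
  = \|(f-q)-Q(f-q)\|_{\Gamma_k,p}
  \le (1+C)\,\|f-q\|_{U_k,p}.
\]
Taking $q$ to be the Taylor-type (or averaged Taylor) polynomial that realises the anisotropic Whitney estimate on the box $U_k$ gives $\|f-q\|_{U_k,p}\le C\bigl(h_1^{n_1}\|\partial_1^{n_1}f\|_{U_k,p}+h_2^{n_2}\|\partial_2^{n_2}f\|_{U_k,p}\bigr)$ — here one uses that $U_k$ is a box of side lengths comparable to $h_1$ and $h_2$ respectively, so the anisotropic (not isotropic) Whitney constant is dimensionless and depends only on $n$ and $p$. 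Summing the $p$-th powers over all cells $k$, using that the neighbourhoods $U_k$ have bounded overlap (a fixed number depending only on $n$), and taking $p$-th roots yields the claimed global bound with $s=Qf\in\tSplSp$.

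The main obstacle is property (iii) together with the geometric bookkeeping that makes it uniform in $\Omega$: one must be sure that every local box $I^*\subset\calB_\Omega(\gamma)$ used to define a coefficient, and every neighbourhood $U_k$ used in the Whitney step, has size comparable to the corresponding knot spacings and lies inside a single chart $\Phi^0$, so that neither the Legendre constants of Lemma~\ref{la:ell} nor the Whitney constant nor the overlap counts ever see the aspect ratio $\varrho$ or the shape of $\Omega$. This is exactly the point where non-uniform condensation is needed — on cells where $|\omega|\le h$ the condensed knots $T^\omega$ have $h^\omega\le|\omega|$, so the longest-interval normalisation $|I_i|\ge\frac1n|\supp b_i|$ and the scaling in Lemma~\ref{la:ell} remain valid with $n$-dependent constants — and where the argument genuinely departs from the uniform case of \cite{ReS}. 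The remaining steps (linearity, polynomial reproduction, the Bramble--Hilbert passage) are routine once the local structure is pinned down.
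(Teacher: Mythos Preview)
Your outline matches the paper's strategy closely: build a quasi-interpolant from a local Legendre projection composed with the polynomial-to-spline coefficient map, verify polynomial reproduction and $L^p$-stability via Lemma~\ref{la:ell} and the uniform bound on the weights $w^n_{i,m}$, then run the local Whitney argument and sum with bounded overlap. The paper does exactly this, with the quasi-interpolant written in terms of the \emph{condensed} diversified B-splines $B_j^*$ rather than the $B_j$ themselves; this is more than a cosmetic choice, since the evaluation points $\lambda^*_{j,m}$ and the box $H_j^*$ on which the Legendre projection acts are scaled to the condensed grid width $h_j^*$, not to $h$. You flag condensation as needed for stability, but in the paper it is already baked into the very definition of $Q$, which is what makes polynomial reproduction \eqref{eq:p} and the scaling in \eqref{eq:Q_j} line up.

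There is one genuine gap. In your Whitney step you assert that ``$U_k$ is a box of side lengths comparable to $h_1$ and $h_2$'', and invoke the anisotropic Whitney estimate on that box. In general no such box exists inside $\Omega$: the set $\Gamma_\ell^+$ that contains all the $H_j^*$ for $j\in J_\ell$ is only a connected piece of $\Omega$ bounded above by the graph $\varphi$, and the boundary can cut arbitrarily close to the grid cell. The anisotropic polynomial approximation you need is therefore not the box version but the version on subgraph regions $\Gamma\subset\Phi^0$ with $|\Gamma|\le (2\bar n+2)h$; this is precisely what the paper imports from \cite{Reif}, Theorem~2.5, and it is a nontrivial ingredient (the constant there is independent of the particular graph $\varphi$, which is what ultimately makes $C$ independent of the shape of $\Omega$). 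Without this you cannot control $\|f-q\|_{U_k,p}$ uniformly, and the argument breaks at exactly the point where diversification and the graph-domain hypothesis are supposed to do their work.
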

\section{Proof}
The key element of the proof is the construction of a bounded
quasi-interpolant, which is given in Section~\ref{subsec:quasi}.
The necessary structure is provided by non-uniform condensation
as described in the first subsection. The method of transferring the
univariate condensation into two dimensions is originated in the
uniform condensation {described in \cite{ReS}}.
\subsection{Condensation}
\label{subsec:cond}
We describe how to apply condensation to diversified B-splines.
{The process of condensation is not applied directly with
respect to the complete $\Omega$. It is applied locally to subdomains
of $\Omega$ which are extensions of the supports $S_j$ and are determined by horizontally and vertically unbounded strips or
intervals depending on the supports $S_i$.}
Given the index $i \in \Z^2$, we define the
{unbounded extensions of the support $S_i$ in $x_1$- and
$x_2$-direction as}
the unbounded intervals $W_{1,i} := \calN(S_i,(\infty,0))$ and
$W_{2,i} := \calN(S_i,(0,\infty))$.
Further, for $j = (i,\gamma) \in J$ we denote
{the connected components of $W_{\sigma,i}$ containing the
support $S_j$ by
$W_{\sigma,j}:=\{A\in\calC_\Omega(W_{\sigma,i}) : S_j\subset A\}$.
See Figures~\ref{fig:1c} and~\ref{fig:1d} for examples of these sets.}
Now, condensation,
as described in Section~\ref{123}, is conducted with respect to the
univariate intervals $\omega_{\sigma,j} := \calB_\sigma(W_{\sigma,j})$
and we obtain the {\em non-uniform condensed diversified B-splines
(or briefly cdB-splines)}
\[
  B_j^*(x) := 
	b_{1,i_1}^{\omega_{1,j}}(x_1)
	b_{2,i_2}^{\omega_{2,j}}(x_2)
	\chi(\gamma)
	,\quad
	j = (i,\gamma) \in J
	.
\]
For their supports $S_j^* := \supp B_j^*$ we have
$S_j^* = S_j \subset S_i$ and $|S_j^*| = |S_j| \le \bar n h < h_0$.
{The supports of non-uniform cdB-splines are illustrated in
Figures~\ref{fig:1e} and~\ref{fig:1f}.}

We give some further notations related to the cdB-splines following
the notations of tensor product B-splines given in
Section~\ref{notation}. We use
$ T_j^*= (T_{1,j}^*,T_{2,j}^*)$ with
$T_{\sigma,j}^* = T_\sigma^{\omega_{\sigma,j}}$ and
maximal grid width $h^*_j$ for the condensed
knot sequences, $\tau_{j,k}^* = (\tau_{1,j,k}^*,\tau_{2,j,k}^*)$ with
$\tau_{\sigma,j,k}^*=\tau_{\sigma,k}^{\omega_{\sigma,j}}$ for the
individual knots, and
$\mu_{j,k}^* = (\tau_{j,k-(1,1)}^*+\tau_{j,k}^*)/2$ for
the midpoints. The local width of a cell is denoted by $h^*_{j,k}$.
\subsection{Local knot structure}
Locally, the condensed knot sequences $T^*_j$ still have a structure
such that some important properties of the spline space, like
partition of unity, remain unchanged. In this subsection, we
introduce some of those properties and the main arguments for their
validity.

{To analyse the structure of knot sequences in a proximity of
a grid cell $\Gamma_k$,} we denote the connected components of
$\Gamma_k\cap\Omega$ by $\Gamma_\ell$,
$\ell\in L := \bigl\{(k,\gamma) : k \in \Z^2,
	\gamma \in \calC_\Omega(\Gamma_k)\bigr\}$.
{We have $\Omega=\bigcup_{\ell\in L}\Gamma_\ell$.}	
Given $\ell\in L$, the neighbourhood
{of those cdB-splines which are} relevant for $\Gamma_\ell$ is
given by
\begin{equation*}
\Gamma^*_\ell := \bigcup_{j \in J_\ell} S_j^*\quad
\text{ with }
J_\ell := \{j \in J : S_j^* \cap \Gamma_\ell \neq \emptyset \}.
\end{equation*}
Since $|\Gamma^*_\ell| \le (2\bar n-1) h \le 2 h_0$, recalling
the arguments below Definition~\ref{def:omega}, we can assume that
$\Gamma^*_\ell \subset \Phi^0$.
{Therefore, for any $j \in J_\ell$, we have
$|\omega_{2,j}|\ge h_0$ and, thus, condensation leaves the knot
sequence in $x_2$-direction as it is and we get $T^*_{2,j} = T_2$.}
In $x_1$-direction there may be a modification which is the same for
all diversified B-splines with equal index $i_2$. More precisely,
for all
$j\in J_\ell^{i_2} := \bigl\{j' = (i',\gamma') \in J_\ell :
	i_2' = i_2\bigr\}$
there exists an interval $\omega'$ such that
we get $T_{1,j}^* := T_1^{\omega_{1,j}} = T_1^{\omega'}$ and
$b_{1,j} := b_{1,i_1}^{\omega_{1,j}} = b_{1,i_1}^{\omega'}$.
Those knot sequences depend only on the component $i_2$ of the index
$j = (i,\gamma) \in J_\ell^{i_2}$.
{For a proof, we refer to \cite{ReS}.}
Together, this allows us to write any spline $s \in \tSplSp$ locally
as
\begin{equation}
\label{eq:s}
  s(x) = \sum_{j \in J_\ell} s_j B^*_j(x)
	=
	\sum_{i_2\in\Z} b_{2,i_2}(x_2) \sum_{j \in J_\ell^{i_2}} 
	s_j b_{1,j}(x_1)
	,\quad
	x \in \Gamma_\ell
	.
\end{equation}
As mentioned above, due to the local structure, cdB-splines form
a partition of unity, i.e., if $s_j=1$ for all $j\in J_\ell$ in
\eqref{eq:s}, then
\begin{equation}
\label{eq:unity}
\sum_{j \in J_\ell} B^*_j(x)
	=
\sum_{i_2\in\Z} b_{2,i_2}(x_2) \sum_{j \in J_\ell^{i_2}} 
	b_{1,j}(x_1)
 = 
\sum_{i_2\in\Z} b_{2,i_2}(x_2) = 1
,\quad
x \in \Gamma_\ell
.
\end{equation}
Hence, using $|\Gamma_\ell|\le h_j^*$ for $j\in J_\ell$, it
follows that
\begin{equation}\label{eq:B_p}
\|B^*_j\|_{\Gamma_\ell,p}^p\le h_{1,j}^*h_{2,j}^*.
\end{equation}
\subsection{Representation of polynomials}
As already mentioned in the preceding section, the construction of a
quasi-interpolant for the proof consists of a projection onto the
space of polynomials combined with the representation of polynomials
in terms of splines, particularly cdB-splines. The latter will be
described in this section. 

We consider polynomials $p \in \P_n$ of coordinate order $n \in \N^2$.
The support of the unrestricted cdB-spline
{corresponding to} $B_j^*$ is given by
$S_j' := \calB(\{\tau^*_{j,i}\} \cup \{\tau^*_{j,i+n}\})$,
{see Figures~\ref{fig:1e} and~\ref{fig:1f}}.
Because of the tensor product structure, the
representation can be build analogous to the univariate case described
in Section~\ref{123}. We set
$\Gamma_{j,K}^*:=\calN(\{\mu_{j,K}^*\},h_{j,K}^*/2)$ {for
$K_\sigma\in\{i_\sigma+1,\dots,i_\sigma+n_\sigma\}$} with
{$|\Gamma_{j,K}^*|\ge \frac{1}{n}|S_j'|$} for the largest
subcell of $S_j'$. Then we define the linear functional
$P_j : \P_n \to \R$ by
\[
  P_j p := \sum_{m_1=1}^{n_1} \sum_{m_2=1}^{n_2}
	{w}_{j,m_1}^{n_1} {w}_{j,m_2}^{n_2}
	p(\lambda_{1,j,m}^*,\lambda_{2,j,m}^*)
	,\quad
	j = (i,\gamma)
	,
\]
with $\lambda_{\sigma,j,m}^*=\tau^*_{\sigma,j,K-(1,1)}+
\frac{m_\sigma-1}{n_\sigma-1}\cdot
|\tau^*_{\sigma,j,K-(1,1)}-\tau^*_{\sigma,j,K}|$.
We claim that 
\begin{equation}
\label{eq:p}
	p(x) = \sum_{j \in J} B_j^*(x) P_j p
	,\quad
	x \in \Omega
	.
\end{equation}
Since $T^*_{2,j}=T_2$, we have
$(\lambda_{2,j,m}^*)^{d_2} =\lambda_{2,i,m}^{d_2}$,
and ${w}_{j,m_2}^{n_2}= {w}_{i_2,m_2}^{n_2}$ is independent of
$j$. Using this, the proof of \eqref{eq:p} is
straightforward considering any monomial
$p^d(x) := x_1^{d_1} x_2^{{d_2}}$ of coordinate degree $d < n$ with
$x \in \Gamma_\ell$ for an arbitrary index $\ell \in L$.
\subsection{Quasi-interpolation}
\label{subsec:quasi}
In this section we construct a suitable quasi-interpolant. We will
use the representation of polynomials developed in the previous
subsection. Since the evaluation points $\lambda^*_{j,m}$ of $P_j$
may lie outside of $\Omega$, we first need a local approximation.

For any $j \in J$, we denote by $S_j^+ := \calN_\Omega(S_j^*,h_j^*)$
the local $h_j^*$-neighbourhood of $S_j^*$. Then, in the same way as in
the uniform case, there exists an interval $H_j^* \subset S_j^+$
of size $|H_j^*| = h_j^*$.
{The interval as well as the local $h_j^*$-neighbourhood are
illustrated in Figures~\ref{fig:1g}--\ref{fig:1j}.}
The interval is given by
$H_j^* := \omega' \times [\tau_{2,i}-h^*_{2,j},\tau_{2,i}]$,
where $\omega'\subset \omega_{1,j}$ with length $|h_{1,j}^*|$
containing some point $x = (x_1,\tau_{2,i}) \in S_j^*$ on the lower
bound of $S_j^*$. For more details see \cite{ReS}.
Next, we define the local approximation over $H^*_j$ as the linear
operator $A_j : W^n_p(H_j^*) \to \P_n$ defined by
\[
  A_j f := \sum_{\alpha<n}\langle f , \ell_{\alpha,j}
      \rangle \ell_{\alpha,j}
	,
\]
where $\{\ell_{\alpha,j} \in L^1(H^+_j) : \alpha < n\}$ are the
normalised tensor product Legendre polynomials over
$H^+_j:= \calN(H_j^*,\bar n h^*_j)$. So, $A_j$ is mapping the
function $f$ to the polynomial which is best approximating on the
interval $H_j^*$ with respect to the $L^2$-norm.

Now, we can define the {\em quasi-interpolant}
$Q : W^n_p(\Omega) \to \tSplSp$ by
\[
  Qf := \sum_{j \in J} B_j^* Q_j f
	,\quad
	Q_j := P_j A_j
	.
\]
The operator $Q$ reproduces polynomials $p\in \P_n$, what follows
immediately from \eqref{eq:p}. Further, the functionals
$Q_j:W^n_p(H_j^*) \rightarrow \R$, $j\in J$, can be written as
$Q_jf=\langle f,q_j\rangle$, which is the inner product of $f$ with
the polynomial
\begin{equation*}
q_j:=
\sum_{m_1=1}^{n_1} \sum_{m_2=1}^{n_2}\sum_{\alpha<n}
{w}_{j,m_1}^{n_1} {w}_{j,m_2}^{n_2}
\ell_{\alpha,j}(\lambda_{1,j,m}^*,\lambda_{2,j,m}^*)
\cdot
\ell_{\alpha,j}.
\end{equation*}
Since $\lambda_{j,m}^*\in S'_j\subset H^+_j$ and, as already remarked
in Section~\ref{123}, all $ w_{j,m_i}^{n_i}$, $i=1,2$, are
bounded by a constant only depending on $n$, we get
\begin{equation*}
\|q_j\|_{H_j^*,q}\le \tilde{c}_n \sum_{\alpha<n}
\|\ell_{\alpha,j}\|_{H^+_j,\infty}\|\ell_{\alpha,j}\|_{H^+_j,q}
\le c'_{n} (h_{1,j}^*h_{2,j}^*)^{\frac{1}{q}-1},
\end{equation*}
where we used Lemma~\ref{la:ell} and
$c'_{n}:=\tilde{c}_n\cdot c_{n}$. Therefore, using
H\"older's inequality, we see that the functionals $Q_j$ are
bounded by
\begin{equation}
\label{eq:Q_j}
|Q_jf|=|\langle f,q_j\rangle|
\le c'_n (h_{1,j}^*h_{2,j}^*)^{\frac{1}{q}-1}\|f\|_{H_j^*,p}.
\end{equation}
Hence, for $p=\infty$ and $q=1$ the boundedness of the spline $Qf$
on $\Gamma_\ell$ follows directly, since, using \eqref{eq:unity},
\begin{equation*}
\|Qf\|_{\Gamma_\ell,\infty}\le 
\max_{j\in J_\ell}|Q_jf|\le
c'_n \max_{j\in J_\ell}\|f\|_{H^*_j,\infty}.
\end{equation*}
Applying Minkowski's inequality and \eqref{eq:B_p} as well as
\eqref{eq:Q_j} in the case of $p<\infty$, we have
\begin{equation*}
\|Qf\|_{\Gamma_\ell,p} 
 \le \sum_{j\in J_\ell}\|B^*_jQ_jf\|_{\Gamma_\ell,p}
= \sum_{j\in J_\ell} |Q_jf|\|B^*_j\|_{\Gamma_\ell,p}
 \le  c'_n\sum_{j\in J_\ell}\|f\|_{H_j^*,p}
 \le  \bar{n}^{2}c'_n\max_{j\in J_\ell}\|f\|_{H_j^*,p},
\end{equation*}
where we use in the last inequality that the number of elements in
$J_\ell$ is $n_1\cdot n_2\le \bar{n}^2$.

In summary, a bound of the spline $Qf$ on $\Gamma_\ell$ for
$1\le p\le\infty$ is given by
\begin{equation}
\label{eq:|Q|_p}
\|Qf\|_{\Gamma_\ell,p}\le c_{n,p} \max_{j\in J_\ell}\|f\|_{H_j^*,p}
\end{equation}
with $c_{n,p}:=c'_n$ for $p=\infty$ and
$c_{n,p}:=\bar{n}^{2} c'_n$ else.
\subsection{Error estimate}
To prove Theorem~\ref{thm:approx} for $f \in W^n_p(\Omega)$, we first
consider the approximation $s := Qf$ on an arbitrary grid cell
$\Gamma_\ell, \ell \in L$. We define
\[
 \Gamma^+_\ell := \calB_\Omega\Bigl(\bigcup_{j \in J_\ell} S_j^+\Bigr).	
\]
It contains all parts of $\Omega$ with potential influence on
$s|_{\Gamma_\ell}$. In particular, $H_j^* \subset \Gamma_\ell^+$ for
all $j \in J_\ell$. We can assume that $\Gamma^+_\ell \in \Phi^0$ since
$|\Gamma^+_\ell| \le (2\bar n+1) h \le 2h_0$.

In
\cite{ReS} the construction of a set $\Gamma$ with
$\Gamma_\ell^+\subset \Gamma \subset \Phi^0$ and
$|\Gamma| \le (2\bar n+2) h$ is shown. Since this construction can be
transfered without any changes, according to \cite{Reif}, Theorem 2.5,
there exists a polynomial $p \in \P_n$ such that the error
$\Delta := f-p$ satisfies
\[
  \|\Delta\|_{\Gamma,p} \le c^*_{n,p} \bigl(
	h_1^{n_1} \|\partial_1^{n_1} f\|_{\Omega,p} +
	h_2^{n_2} \|\partial_2^{n_2} f\|_{\Omega,p} \bigr)
	,
\]
where the constant $c^*_{n,p}$ depends only on $n$ and $p$.
Eventually, we use reproduction of polynomials by the
quasi-interpolant $Q$ and \eqref{eq:|Q|_p} to obtain
\begin{align*}
\|f-Qf\|_{\Gamma_\ell,p} & \le
	\|\Delta\|_{\Gamma_\ell,p} + \|Q\Delta\|_{\Gamma_\ell,p}
	\le \|\Delta\|_{\Gamma_\ell,p}
	+ c_{n,p}\max_{j\in J_\ell}	\|\Delta\|_{H_j^*,p} \\
  & \le (1+c_{n,p})\|\Delta\|_{\Gamma,p} 
  	\le (1+c_{n}) c^*_{n,p}  \bigl(
	h_1^{n_1} \|\partial_1^{n_1} f\|_{\Omega,p} +
	h_2^{n_2} \|\partial_2^{n_2} f\|_{\Omega,p} \bigr)
	.
\end{align*}
The index $\ell \in L$ was chosen arbitrarily and, on the other hand,
$|\Gamma| \le (2\bar n+1) h$ for all $\ell \in L$.
Therefore, those restricted intervals can only overlap
$\tilde{c}\cdot\bar{n}^2$ times, and the claim of Theorem~\ref{thm:approx}
follows with 
\[
C := \begin{cases}
	(1+c'_{n}) c^*_{n,p} & \text{for } p=\infty,\\
	(\bar{n}^2+\bar{n}^4c'_{n})c^*_{n,p}\,\tilde{c} & \text{else}.
\end{cases}
\]
\section{Conclusion}
In this paper we generalised the ideas presented in \cite{ReS} to
arbitrary knot sequences and to error estimates with respect to
anisotropic Sobolev norms. In comparison to the uniform theory the
following conclusions can be made:
\begin{itemize}
\item In the bivariate case, diversification of tensor product
B-splines provides a construction of spline spaces which have optimal
approximation properties not only for uniform knot sequences but also
for non-uniform knot sequences.
\item Non-uniform condensation is used for the construction of bounded
quasi-in\-ter\-po\-lants for arbitrary tensor product spline spaces.
Since non-uniform condensation, as described in Section~\ref{123}, is
only depending on the size of the domain and is independent of the number
of knots inside of the domain, it is even easier than the condensation
described in \cite{ReS}.
\item In three or more variables, diversification can be used for
spline spaces with arbitrary knot sequences, but as already shown in
\cite{ReS} even for convex domains with smooth boundary the constant
of the error estimate can depend on the mesh ratio. In the case of
convex domains diversification leaves the spline space as it is and
(non-uniform) condensation is just a change of basis, therefore the dependence
remains unaffected.
\end{itemize}

An open question for future research is how the constants of error
estimates for spline approximation are connected to the geometry of
domains in higher dimensions.
\begin{figure}[ht]
\centering
\subfigure[Support of $B_i$\label{fig:1a}]{\includegraphics[width=6.5cm]{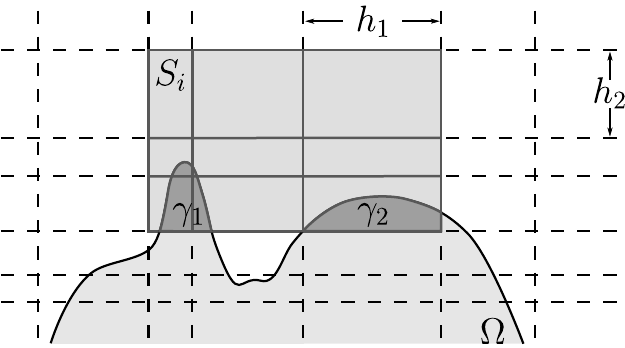}}\hspace{.5cm}
\subfigure[Supports of diversified B-splines\label{fig:1b}]{\includegraphics[width=6.5cm]{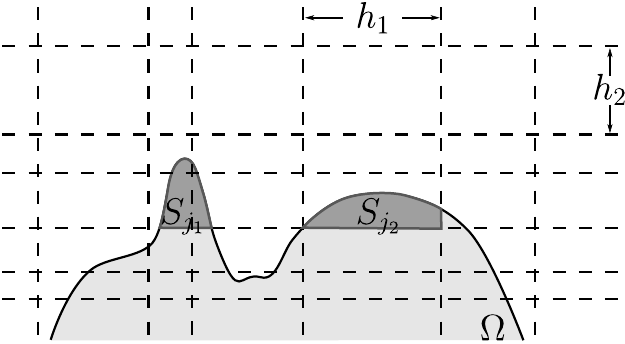}}\\

\subfigure[Extension of $S_i$ in $x_1$-direction\label{fig:1c}]{\includegraphics[width=6.5cm]{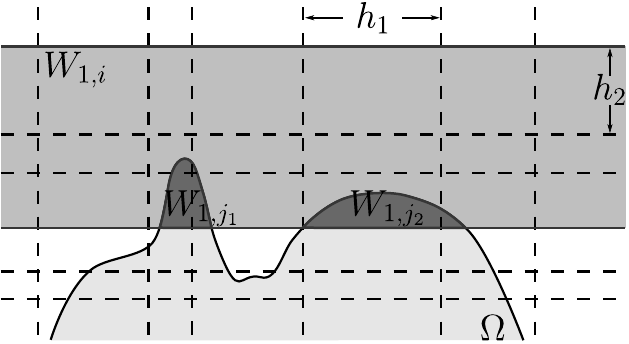}}\hspace{.5cm}
\subfigure[Extension of $S_i$ in $x_2$-direction\label{fig:1d}]{\includegraphics[width=6.5cm]{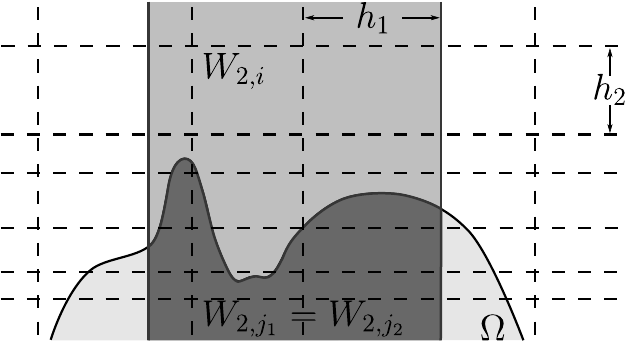}}\\

\subfigure[Support of (unrestricted) cdB-spline $B_{j_1}^*$\label{fig:1e}]{\includegraphics[width=6.5cm]{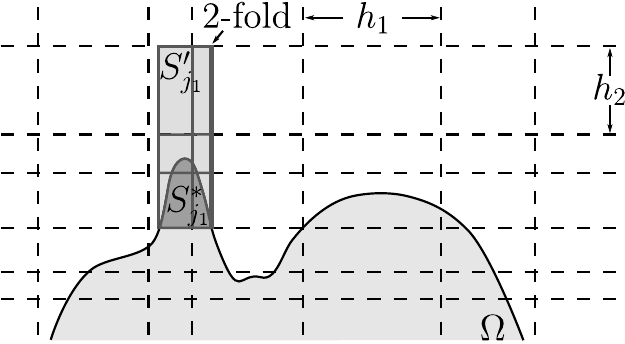}}\hspace{.5cm}
\subfigure[Support of (unrestricted) cdB-spline $B_{j_2}^*$\label{fig:1f}]{\includegraphics[width=6.5cm]{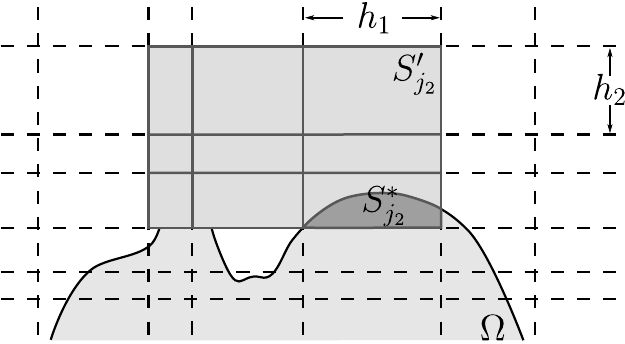}}\\

\subfigure[Neighbourhood of $S_{j_1}^*$\label{fig:1g}]{\includegraphics[width=6.5cm]{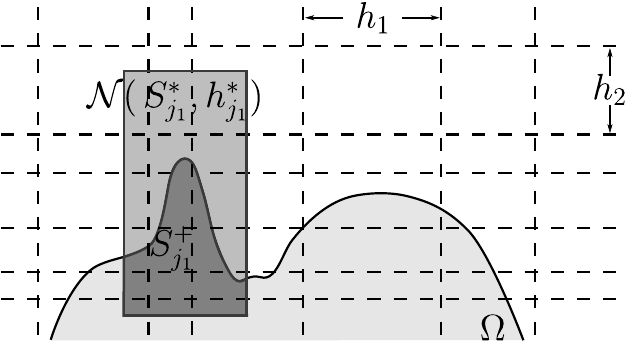}}\hspace{.5cm}
\subfigure[Neighbourhood of $S_{j_2}^*$\label{fig:1h}]{\includegraphics[width=6.5cm]{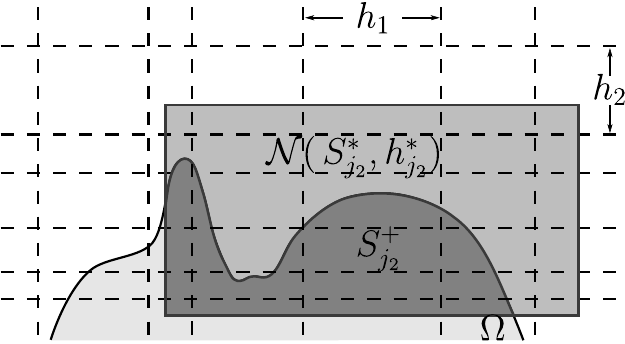}}\\

\subfigure[Domain $H_{j_1}^*$ for local approximation\label{fig:1i}]{\includegraphics[width=6.5cm]{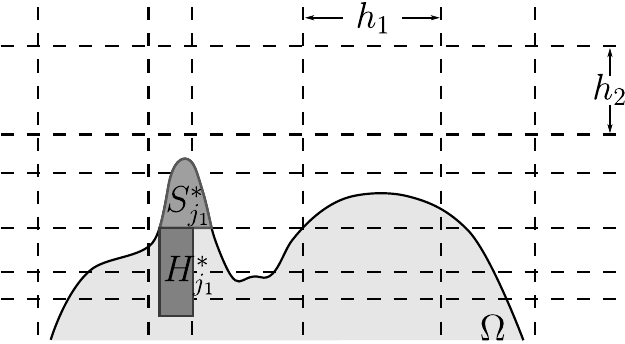}}\hspace{.5cm}
\subfigure[Domain $H_{j_2}^*$ for local approximation\label{fig:1j}]{\includegraphics[width=6.5cm]{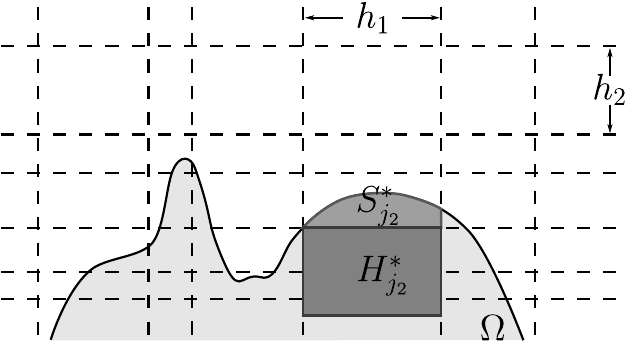}}
\caption{Examples for diversification, condensation, and related sets}
\label{fig:1}
\end{figure}
\bibliographystyle{alpha}
\bibliography{lit}
\end{document}